\newtheorem{theorem}{Theorem}[section]
\newtheorem{lemma}[theorem]{Lemma}
\newtheorem{corollary}[theorem]{Corollary}
\newtheorem{definition}[theorem]{Definition}
\newtheorem{remark}[theorem]{Remark}
\newcommand{\C}{{\mathbb C}}
\newcommand{\PP}{{\mathbb P}}
\newcommand{\SFF}{{II}} 
\DeclareMathOperator{\codim}{{codim}}
\DeclareMathOperator{\Sec}{{Sec}}
\DeclareMathOperator{\Hom}{{Hom}}
\DeclareMathOperator{\Singloc}{{Singloc}}
\DeclareMathOperator{\Ann}{{Ann}}
\DeclareMathOperator{\Baseloc}{{Baseloc}}
\title{A note on secant defective varieties and Clifford modules}
\author{Oliver Nash\\Imperial College London}
\date{June 2023}
\begin{document}
\maketitle

\begin{abstract}
  We generalise a construction of Landsberg, which associates certain Clifford
  algebra representations to Severi varieties. We thus obtain a new proof of
  Russo's Divisibility Property for LQEL varieties.
\end{abstract}

\section{Introduction}\label{Intro_sect}
The geometry of secant-defective varieties is surprisingly rich. In the early
20\textsuperscript{th} Century, the subject captured the attention of several
members of the Italian School of Algebraic Geometry and important results
appear in numerous beautiful old papers, such as those of Scorza \cite{Scorza},
Severi \cite{Severi}, and Terracini \cite{Terracini}.

In the 1980s the subject enjoyed a renaissance, largely due to a series of
breakthroughs made by Zak \cite{MR1234494}. Zak's solution of Hartshorne's
linear normality conjecture \cite{MR0384816} lead to his classification of
maximally-secant-deficient varieties, which he named Severi varieties. He
showed that there are exactly four Severi vareties, that they correspond to
normed division algebras, and that their dimensions are 2, 4, 8, 16. The
hardest part of the classification was establishing the dimension restriction.

In 1996, an intruiging paper of Landsberg \cite{MR1422359} appeared in which
he showed that the extrinsic geometry of a Severi variety induces certain
Clifford algebra representations. Using the classification of Clifford
modules, it is then trivial to see that the dimensions of the Severi varieties
must take the values already established by Zak.

The main purpose of this note is to show that Landsberg's Clifford modules
may be generalised. Severi varieties are examples of a class of varieties
introduced by Russo \cite{MR2501303} in 2009, called LQEL varieties and we show
that Landsberg's construction works in this more general setting.

Actually the generalisation is only an extremely mild extension of Landberg's
results. However we believe it is worth highlighting because, together with the
classification of Clifford modules, it provides a new proof of Russo's
Divisibility Property for LQEL varieties (corollary \ref{cor:lqel_divisibility}).

\section{Secant defective varieties and Clifford modules}
We shall follow Landsberg \cite{MR3586335, MR1422359} closely and so
recall his constructions and notation\footnote{Note that \cite{MR3586335} (which we follow)
adopts slightly different conventions than \cite{MR1422359}. For example
$\Ann(v)$ in \cite{MR1422359} corresponds to $\PP\Ann(v)$ in \cite{MR3586335}.}.

Our primary object of concern is a subvariety of projective space. We write:
\begin{align*}
  X \subseteq \PP^{n+a},
\end{align*}
to indicate that the variety $X$ is $n$-dimensional and that the embedding has
codimension $a$. We work over $\C$ throughout, and assume that $X$ is smooth,
irreducible, and non-degenerate\footnote{Not contained in a hyperplane.}, with
secant deficiency $\delta \ge 1$.

\subsection{Second fundamental form}
We recall \cite{MR559347} that the second fundamental form of an embedding
$X \subseteq \PP^{n+a}$ is a section of $\Hom(S^2TX, N)$, where $S^2TX$ is the
symmetric square of the tangent bundle and $N$ is the normal bundle. Thus for
$x \in X$, we have a symmetric bilinear map:
\begin{align*}
  \SFF^x : S^2T_xX \to N_x.
\end{align*}
When we have chosen a point $x \in X$ and there is no possibility of confusion,
we will write $T$ for $T_x X$, $N$ for $N_x$ and $\SFF$ for $\SFF^x$.
Taking the transpose, we also regard the second fundamental form as a linear
system of quadrics:
\begin{align*}
  \SFF^* : N^* \to S^2T^*.
\end{align*}

A key observation is that global properties of $X$ are visible infinitesimally
via the second fundamental form, and exceptional global properties tend to produce
linear systems of quadrics with exceptional properties.

For example, if $X$ has a smooth dual variety, then at a general point $\SFF^*$
is a linear system of quadrics of \emph{constant rank}, and from this follows
Landman's parity theorem for dual-deficient varieties
(see \cite[Theorem 12.4.8 and Corollary 12.4.10]{MR3586335}).

We shall show that if $X$ has the exceptional property that the
Gauss map of a general tangential projection has zero-dimensional fibres,
then its second fundamental form can be used to construct certain Clifford
modules, and from this follows Russo's Divisibility Property for
secant-deficient varieties (see \cite[Theorem 2.8]{MR2501303} or
\cite[Theorem 4.2.8]{MR3445582}).

\subsection{Tangential projections}
We now assume $X$ is non-linear\footnote{Note that this is automatic if
$\Sec(X) \ne \PP^{n+a}$ since $X$ is non-degenerate.} make two definitions to
fix notation:
\begin{definition}
  Let $x \in X$ and let $\mathbb{T}_x X \subseteq \PP^{n+a}$
  be the embedded tangent space at $x$. Away from $\mathbb{T}_x X$ we define a
  rational map:
  \begin{align*}
    \pi_x : X &\dashrightarrow \PP N,\\
    y &\mapsto [\langle y, \mathbb{T}_x X \rangle],
  \end{align*}
  where $N$ is the fibre of normal bundle of $X$ at $x$.
  The map $\pi_x$ is known as the tangential projection map at $x$.
\end{definition}
We recommend \cite[\S 2.3.2, \S3.3]{MR3445582} for a valuable discussion of
tangential projections.
\begin{definition}
  Let $x \in X$ and $\SFF$ be the second fundamental form at $x$.
  Away from $\Baseloc\SFF^*$ we define the rational map:
  \begin{align*}
    ii : \PP T &\dashrightarrow \PP N,\\
    [v] &\mapsto [\SFF(v, v)].
  \end{align*}
\end{definition}
We recall that the closures of the images of $\pi_x$ and $ii$ coincide and have
dimension $n - \delta$ (see e.g., \cite[Proposition 2.3.5]{MR3445582} and its
proof). Let this
$(n - \delta)$-dimensional irreducible subvariety be:
\begin{align*}
  Z \subseteq \PP N,
\end{align*}
and let:
\begin{align*}
  \Sec(X) \subseteq \PP^{n+a},
\end{align*}
be the $(2n + 1 - \delta)$-dimensional secant variety of $X$, then we note for
future reference that:
\begin{align}\label{eq:codim_sec_codim_Z}
  \codim Z = \codim \Sec(X).
\end{align}

\subsection{Second fundamental form of a tangential projection}
The following is essentially a restatement of \cite[Lemma 6.6]{MR1422359}.

Given a general point $x \in X$ and a general\footnote{Thus $v$ is non-zero and
$[v] \not\in \Baseloc \SFF^*$.} vector $v \in T$, let $z = ii([v])$. It follows
from the definition of $ii$ that there is a natural commutative diagram:
\begin{align*}
  \xymatrix{
    T \ar[r]^{\SFF_v} \ar[d]        & N \ar[d]\\
    T_{[v]}\PP T \ar[r]^{ii_{*[v]}} & T_z \PP N
  }
\end{align*}
where $ii_{*[v]}$ is the derivative of $ii$ at $[v]$ and $\SFF_v$ is the map:
\begin{align*}
  \SFF_v : w \mapsto \SFF(v, w).
\end{align*}
We thus have natural exact sequences:
\begin{align}\label{ses:rho_T}
  0 \to \langle v, \ker\SFF_v \rangle \to T \xrightarrow{\rho_T} T_z Z \to 0,
\end{align}
and:
\begin{align*}
  0 \to \SFF_v(T) \to N \xrightarrow{\rho_N} N_z Z \to 0,
\end{align*}
where $N_zZ$ is the normal bundle of $Z \subseteq \PP N$ at $z$.
The maps $\rho_T$ and $\rho_N$ fit into the following commutative diagram:
\begin{align}\label{sff_of_tproj}
  \begin{gathered}
    \xymatrix{
      S^2 T    \ar[r]^{\SFF} \ar[d]_{\rho_T\otimes\rho_T}   & N \ar[d]^{\rho_N}\\
      S^2 T_zZ \ar[r]^{\widetilde\SFF}                      & N_z Z
    }
  \end{gathered}
\end{align}
where $\widetilde\SFF$ is the second fundamental form of $Z$ at $z$.

\subsection{Singular locus of the second fundamental form}
Griffiths and Harris noticed that the quadrics of the second
fundamental form are all singular along the fibres of the Gauss map. In fact
they proved \cite[(2.6)]{MR559347}:
\begin{align*}
  \Singloc(N^*) = T_x F,
\end{align*}
where $F$ is the fibre of the Gauss map through $x$ and for any
$A \subseteq N^*$, $\Singloc(A)$ is the intersection of all the singular loci:
\begin{align}\label{singloc_eqn}
  \Singloc(A) = \bigcap_{f \in A} \{ v \in T ~|~ v \righthalfcup \SFF^*(f) = 0 \}.
\end{align}

A key insight of Landsberg \cite{MR1422359} was that there are natural
subsystems $A \subseteq N^*$ for which $\Singloc(A)$ captures more delicate
geometric features of $X$. Indeed it follows from \eqref{sff_of_tproj} that
there is a natural exact sequence:
\begin{align}\label{ses:singloc_ann}
  0 \to \ker\rho_T \to \Singloc(\rho_N^*(N^*_z Z)) \to \Singloc(N_z^*Z) \to 0,
\end{align}
and so for $v \in T$, Landsberg defined:
\begin{align*}
  \Ann(v) &= \rho_N^*(N^*_z Z)\\
          &= \SFF_v(T)^\perp\\
          &= \{ f \in N^* ~|~ v \righthalfcup \SFF^*(f) = 0 \},
\end{align*}
and studied the middle term $\Singloc(\Ann(v))$ appearing in \eqref{ses:singloc_ann}.

We note for future reference that:
\begin{align}\label{eq:dim_Ann}
  \dim \Ann(v) &= \codim Z\notag\\
               &= \codim \Sec(X) \quad\mbox{by \eqref{eq:codim_sec_codim_Z}}.
\end{align}

\subsection{Clifford modules}
The simplest class of secant-deficient varieties are those for which the Gauss
map of a general tangential projection has zero-dimensional fibres.
For emphasis we state a key consequence of this property:
\begin{lemma}\label{lem:minimal_singloc_ann}
  Let $X \subseteq \PP^{n+a}$ be a smooth, irreducible, non-degenerate variety
  of secant deficiency $\delta \ge 1$. Let $x \in X$, $v \in T_x X$ be general
  and let $Z \subseteq \PP N$ be the closure of the tangential projection at
  $x$. Then the Gauss map of $Z$ has zero-dimensional fibres if and only if:
  \begin{align}\label{eq:key_identity}
    \langle v, \ker \SFF_v \rangle = \Singloc(\Ann(v)).
  \end{align}
\end{lemma}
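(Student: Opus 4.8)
The plan is to prove the equivalence by relating the Gauss map fibre of $Z$ at the point $z = ii([v])$ to the singular locus of the annihilator subsystem $\Ann(v)$, using the short exact sequence \eqref{ses:singloc_ann} as the main bridge. Let me first recall the structure: by the Griffiths–Harris description \eqref{singloc_eqn}, applied to the variety $Z$ rather than $X$, we have $\Singloc(N_z^*Z) = T_z G$ where $G$ is the fibre of the Gauss map of $Z$ through $z$. Thus the Gauss map of $Z$ has zero-dimensional fibres (at the general point $z$) precisely when $\Singloc(N_z^*Z) = 0$. My strategy is therefore to feed this condition into the exact sequence \eqref{ses:singloc_ann} and read off what it forces on the middle term.

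First I would make explicit the identification $\Singloc(\Ann(v)) = \Singloc(\rho_N^*(N_z^*Z))$, which is immediate from Landsberg's definition $\Ann(v) = \rho_N^*(N_z^*Z)$. Then the exact sequence \eqref{ses:singloc_ann} reads
\begin{align*}
  0 \to \ker\rho_T \to \Singloc(\Ann(v)) \to \Singloc(N_z^*Z) \to 0.
\end{align*}
Next I would identify $\ker\rho_T$ using the defining sequence \eqref{ses:rho_T}: there we have $\ker\rho_T = \langle v, \ker\SFF_v\rangle$. Substituting this in, the exact sequence becomes
\begin{align*}
  0 \to \langle v, \ker\SFF_v\rangle \to \Singloc(\Ann(v)) \to \Singloc(N_z^*Z) \to 0.
\end{align*}
From this it is transparent that $\langle v, \ker\SFF_v\rangle = \Singloc(\Ann(v))$ holds if and only if the quotient term $\Singloc(N_z^*Z)$ vanishes. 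So the whole lemma reduces to the assertion that the Gauss map of $Z$ has zero-dimensional fibres if and only if $\Singloc(N_z^*Z) = 0$.

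Finally I would establish this last assertion by invoking the Griffiths–Harris computation $\Singloc(N_z^*Z) = T_z G$ (the tangent space to the Gauss fibre $G$ of $Z$ through $z$), which is exactly equation \eqref{singloc_eqn} transported from $X$ to $Z$. Since $z$ is a general point of $Z$ — it is the image under $ii$ of a general $[v]$ — the fibre $G$ is a general fibre of the Gauss map of $Z$, and $\dim T_z G = \dim G$. Hence $\Singloc(N_z^*Z) = 0$ if and only if $\dim G = 0$, i.e. the general Gauss fibre of $Z$ is zero-dimensional. Chaining the equivalences gives the lemma.

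The step I expect to be the main obstacle is the careful verification that the Griffiths–Harris formula and the exact sequence \eqref{ses:singloc_ann} apply at the chosen point $z$, which requires that $z$ be a \emph{general} point of $Z$ so that $Z$ is smooth there and its Gauss fibre is generic. This is where the genericity hypotheses on $x$ and $v$ do real work: one must check that a general $v \in T_xX$ produces, via $ii$, a general point of $Z$, so that dimension-counting for the Gauss fibre of $Z$ is legitimate and the sequence \eqref{ses:singloc_ann} — whose derivation from \eqref{sff_of_tproj} implicitly uses smoothness of $Z$ at $z$ — is valid. Once this genericity bookkeeping is in place, the algebraic core of the argument is simply the exactness of \eqref{ses:singloc_ann} combined with the identification of its outer terms, and the equivalence falls out immediately.
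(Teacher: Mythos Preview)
Your proposal is correct and follows essentially the same route as the paper: apply Griffiths--Harris to $Z$ so that the Gauss map of $Z$ has zero-dimensional fibres iff the third term of \eqref{ses:singloc_ann} vanishes, then use \eqref{ses:rho_T} to identify the first term as $\langle v,\ker\SFF_v\rangle$. Your added remarks on genericity of $z$ are a welcome elaboration but do not change the underlying argument.
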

\begin{proof}
  Applying Griffiths and Harris's result \cite[(2.6)]{MR559347} to $Z$, we know
  that the Gauss map of $Z$ has zero-dimensional fibres if and only if the
  third term in \eqref{ses:singloc_ann} vanishes. Bearing in mind
  \eqref{ses:rho_T}, the conclusion is clear.
\end{proof}

\begin{remark}
  The Scorza Lemma \cite[Theorem 3.3.3]{MR3445582} tells us that the varieties
  satisfying the conditions of lemma \ref{lem:minimal_singloc_ann} are LQEL
  varieties. Moreover we have examples:
  \begin{itemize}
    \item the quadratic Veronese embeddings $\nu_2(\PP^n) \subseteq \PP^{n(n+3)/2}$ for $n \ge 2$ ($\delta = 1$),
    \item the binary Segre embeddings $\PP^n \times \PP^m \subseteq \PP^{mn + m + n}$ for $m + n \ge 3$ ($\delta = 2$),
    \item the rank-2 Pl\"ucker embeddings $G(2, n) \subseteq \PP^{(n-2)(n+1)/2}$ for $n \ge 5$ ($\delta = 4$),
    \item the 16-dimensional Severi variety in $\PP^{26}$ ($\delta = 8$),
  \end{itemize}
  as well as their linear projections. We recommend \cite{MR3445582} for
  further discussion.
\end{remark}

\begin{remark}
  In \cite[Definition 3.3.1]{MR3445582}, given general points $x, y \in X$
  and general $p \in \Sec(X)$ on the line $\overline{xy}$, Russo defines the
  contact locus $\Gamma_p \subseteq X$ as:
  \begin{align*}
    \Gamma_p = \overline{\{ x \in X ~|~ T_x X \subseteq T_p\Sec(X) \}},
  \end{align*}
  and notes that by Terracini's lemma:
  \begin{align*}
    \Sigma_p \subseteq \Gamma_p,
  \end{align*}
  where $\Sigma_p$ is the entry locus of $X$ with respect to $p \in \Sec(X)$.

  Let $Z = \overline{\pi_x(X)}$ be the closure of the tangential projection at
  $x$ and $F \subseteq Z$ be the fibre of the Gauss map of $Z$ through
  $z = \pi_x(y)$. As argued by Russo in the proof of
  \cite[Lemma 3.3.2]{MR3445582} the irreducible components of $\pi_x^{-1}(F)$
  and $\Gamma_p$ through $y$ coincide. We should thus have a natural exact
  sequence of tangent spaces:
  \begin{align}\label{ses:singloc_ann_alt}
    0 \to T_y \Sigma_p \to T_y\Gamma_p \to T_z F \to 0.
  \end{align}
  The line $\overline{yp}$ naturally determines a vector $v \in T_y X$, and we
  expect:
  \begin{align*}
    \Singloc(Ann(v)) = T_y\Gamma_p,
  \end{align*}
  so that \eqref{ses:singloc_ann_alt} can be interpreted as
  \eqref{ses:singloc_ann}. Given this, \cite[Lemma 3.3.2 (2)]{MR3445582} would
  provide an alternate proof of lemma \ref{lem:minimal_singloc_ann} above.
\end{remark}

We come at last to our main point:
\begin{theorem}\label{thm:clifford_module}
  Let $X \subseteq \PP^{n+a}$ be an smooth, irreducible, non-degenerate variety
  of secant deficiency $\delta \ge 1$ such that $\Sec(X) \ne \PP^{n+a}$.
  Suppose that the Gauss map of the tangential projection at a general point
  $x$ has zero-dimensional fibres, and let $v \in T_x X$ be general. Then
  $T / \Singloc(\Ann(v))$ carries a natural Clifford module structure over
  $\ker\SFF_v$.
\end{theorem}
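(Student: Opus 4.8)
The plan is to build the module structure on $M := T/\langle v,\ker\SFF_v\rangle$ and then invoke the hypothesis to identify this with the quotient in the statement. Indeed, by lemma~\ref{lem:minimal_singloc_ann} the zero-dimensional fibre hypothesis gives \eqref{eq:key_identity}, so $\Singloc(\Ann(v)) = \langle v,\ker\SFF_v\rangle$ and $T/\Singloc(\Ann(v))$ is exactly $M$. The sequence \eqref{ses:rho_T} identifies $M$ with $T_zZ$ via $\rho_T$, where $z = ii([v])$; on the other hand, writing $z_0 = \SFF(v,v)$ (nonzero as $v$ is general), the map $\SFF_v$ induces an isomorphism $T/\ker\SFF_v \xrightarrow{\sim}\SFF_v(T)$ sending $\langle v\rangle$ to $\langle z_0\rangle$, so $M\cong\SFF_v(T)/\langle z_0\rangle$ as well. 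I will use both descriptions. The goal is to produce a quadratic form $Q$ on $\ker\SFF_v$ and a linear map $c\colon\ker\SFF_v\to\End(M)$ with $c(w)^2 = -Q(w)\,\mathrm{id}_M$ (up to the usual sign convention), since the universal property of $\mathrm{Cl}(\ker\SFF_v,Q)$ then makes $M$ a Clifford module.

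Two structural facts about $\SFF$ on $\langle v,\ker\SFF_v\rangle$ drive the construction. First, $\SFF_w(T)\subseteq\SFF_v(T)$ for every $w\in\ker\SFF_v$: by diagram \eqref{sff_of_tproj}, $\rho_N\SFF(w,u) = \widetilde\SFF(\rho_T w,\rho_T u)$, and $\rho_T w = 0$ since $w\in\ker\SFF_v\subseteq\ker\rho_T$, so $\SFF(w,u)\in\ker\rho_N = \SFF_v(T)$ for all $u$. Second, $\SFF(w,w)\in\langle z_0\rangle$ for all $w\in\ker\SFF_v$; geometrically this says $ii$ contracts the linear space $\PP\langle v,\ker\SFF_v\rangle$ to the point $z$, and it is here that the hypothesis enters, via \eqref{eq:key_identity} together with the reformulation $\Singloc(\Ann(v)) = \{w'\in T\mid\SFF_{w'}(T)\subseteq\SFF_v(T)\}$ obtained by unwinding \eqref{singloc_eqn}. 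Granting this, polarisation gives $\SFF(w,w')\in\langle z_0\rangle$ for all $w,w'\in\ker\SFF_v$, hence a symmetric bilinear form $B$ with $\SFF(w,w') = B(w,w')\,z_0$; set $Q(w) = B(w,w)$.

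The Clifford multiplication is modelled on left multiplication in a normed division algebra. For $w\in\ker\SFF_v$, let $c(w)\colon M\to M$ be induced by $u\mapsto\SFF_v^{-1}\bigl(\SFF(w,u)\bigr)$; this is meaningful because $\SFF(w,u)\in\SFF_v(T)$ by the first fact and $\SFF_v$ is injective modulo $\ker\SFF_v$. It descends to $M=T/\langle v,\ker\SFF_v\rangle$: one has $\SFF(w,v)=0$, so $v\mapsto 0$, while for $w'\in\ker\SFF_v$ we get $\SFF_v^{-1}(\SFF(w,w')) = B(w,w')\,\SFF_v^{-1}(z_0) = B(w,w')\,[v]$, which vanishes in $M$. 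Thus $c$ is a well-defined linear map $\ker\SFF_v\to\End(M)$.

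It remains to verify the Clifford relation $c(w)^2 = -Q(w)\,\mathrm{id}_M$, and this is the main obstacle. Unwinding the definitions, if $u'\in T$ satisfies $\SFF(v,u') = \SFF(w,u)$ then $c(w)^2[u] = [\,\SFF_v^{-1}\SFF(w,u')\,]$, so the relation reduces to the quadratic identity of the second fundamental form
\[ \SFF(w,u')\equiv -Q(w)\,\SFF(v,u)\pmod{\langle z_0\rangle} \]
(together with its polarisation using $-B(w,w')$). I expect to prove this exactly as Landsberg does for Severi varieties, combining the symmetry of $\SFF$ with the relations $\SFF(v,w)=0$, $\SFF(w,w)=Q(w)z_0$, and the contraction of $\PP\langle v,\ker\SFF_v\rangle$ established above, the last being the decisive use of the zero-dimensional fibre hypothesis. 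That the hypothesis is genuinely indispensable here is confirmed by the dimension count: $\dim\ker\SFF_v = \delta-1$ (from \eqref{ses:rho_T}, since $\dim Z = n-\delta$) and $\dim M = n-\delta$, so for general secant-defective $X$ a Clifford module structure is impossible, as it would force the Russo divisibility of $n-\delta$ by $2^{\lfloor(\delta-1)/2\rfloor}$. Once the relation holds, the universal property of $\mathrm{Cl}(\ker\SFF_v,Q)$ furnishes the asserted module structure on $M = T/\Singloc(\Ann(v))$.
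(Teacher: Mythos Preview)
Your setup is sound: the identification $T/\Singloc(\Ann(v)) = T/\langle v,\ker\SFF_v\rangle$ via lemma~\ref{lem:minimal_singloc_ann}, and the definition of $c(w)$ as the map induced by $\SFF_v^{-1}\circ\SFF_w$, are both correct and match Landsberg's construction. The paper, however, does not rebuild this. It first projects from a maximal linear space disjoint from $\Sec(X)$ so that $\Sec(X)$, and hence $Z$, becomes a hypersurface, and then simply invokes \cite[Lemma~6.26]{MR1422359} verbatim, noting that the one hypothesis requiring verification ($S=0$ in the notation of \cite{MR3586335}) is exactly lemma~\ref{lem:minimal_singloc_ann}. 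You omit the hypersurface reduction, so you cannot cite Landsberg's lemma as stated.

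The real gap is the Clifford relation $c(w)^2 = -Q(w)\,\mathrm{id}_M$, which you rightly call the main obstacle and then leave as an expectation. The ingredients you propose to combine---symmetry of $\SFF$, the vanishing $\SFF(v,w)=0$, the relation $\SFF(w,w)=Q(w)z_0$, and \eqref{eq:key_identity} at the fixed vector $v$---are not enough. On $T=\langle v,w,e_1,e_2\rangle$, $N=\langle z_0,n_1,n_2,m\rangle$ take $\SFF(v,v)=\SFF(w,w)=z_0$, $\SFF(v,e_i)=n_i$, $\SFF(w,e_1)=n_2$, $\SFF(w,e_2)=n_1$, $\SFF(e_i,e_i)=m$ and all other pairings zero: every listed constraint holds at $v$ (in particular $\Singloc(\Ann(v))=\langle v,w\rangle$), yet $c(w)$ swaps $[e_1]\leftrightarrow[e_2]$ and so $c(w)^2=+\mathrm{id}$. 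What actually forces the sign is that \eqref{eq:key_identity} holds for \emph{general} $v'\in T$ (in this toy example it already fails at $v'=v+te_1$), and extracting the quadratic identity from that is precisely Landsberg's adapted-frame computation producing the cited equation~(12.22); your sketch never varies $v$ or $x$ and so cannot reach it. The same omission affects your ``second fact'': the reformulation of $\Singloc(\Ann(v))$ yields only $\SFF(w,w)\in\SFF_v(T)$, not $\SFF(w,w)\in\langle z_0\rangle$.
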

\begin{proof}
  Let $Z = \overline{\pi_x(X)} \subseteq \PP N$ be the closure of the image of
  the tangential projection at $x$.
  
  The result we need is exactly
  \cite[Lemma 6.26]{MR1422359} except that we have made no assumption about
  $Z$ being a cone (instead assuming that its Gauss map has zero-dimensional
  fibres) and we do not assume that $Z$ is a
  hypersurface. In view of \eqref{eq:codim_sec_codim_Z}, $Z$ is a hypersurface
  if and only if $\Sec(X)$ is. However since the linear projection from a
  linear subspace which does not meet $\Sec(X)$ is an isomorphism, we may
  select a maximal such subspace and project down to the case that $\Sec(X)$
  is a hypersurface; the lemma then applies, and our proof is complete.

  For the benefit of readers who wish to compare with \cite{MR3586335}, we
  provide a reference for the argument as presented there. The key equation is
  \cite[(12.22) page 374]{MR3586335}:
  \begin{align*}
    q^{n+j}_{\epsilon\kappa}q^{n+k}_{\delta i} +
    q^{n+j}_{\delta k}q^{n+k}_{\epsilon i} =
    -2q^{n+1}_{\epsilon\delta}\delta^i_j \quad
    \forall \epsilon, \delta, j, k, i.
  \end{align*}
  The key assumption required for the derivation is $S = 0$ where:
  \begin{align*}
    S = \dim\Singloc\Ann(v) - \dim\langle v, \ker\SFF_v\rangle,
  \end{align*}
  which follows from lemma \ref{lem:minimal_singloc_ann}.
\end{proof}

\begin{remark}
  We can restate theorem \ref{thm:clifford_module} without referring
  to the second fundamental form as follows.

  Let $X$ be as in theorem \ref{thm:clifford_module} and let $p \in \Sec(X)$
  and $x \in X$ be general points. Let $Q \subseteq X$
  be the irreducible component of the $p$-entry locus through $x$
  and let $Q' \subseteq Q$ be the corresponding\footnote{See
  \cite[Lemma 2.4]{MR3228452}.} irreducible component of the tangent locus through $x$. Then
  $T_xQ'$ carries a non-degenerate quadratic form and the fibre $N^x_{Q|X}$
  of the normal bundle of $Q$ in $X$ is a Clifford module for the Clifford
  algebra $Cl(T_xQ')$.
\end{remark}

We emphasise the following corollary:
\begin{corollary}\label{cor:lqel_divisibility}
  Let $X$ be as in theorem \ref{thm:clifford_module} then:
  \begin{align*}
    \left. 2^{\left\lfloor\frac{\delta-1}{2}\right\rfloor} ~ \right\mid ~ n - \delta.
  \end{align*}
\end{corollary}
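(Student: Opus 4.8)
The plan is to read the divisibility directly off the dimension of the Clifford module produced by theorem \ref{thm:clifford_module}, combined with the classification of complex Clifford modules. So the strategy is almost entirely a dimension count, once the structure theorem is granted.

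First I would pin down the relevant dimensions. Since $v$ is general we have $[v]\notin\Baseloc\SFF^*$, hence $\SFF(v,v)\neq 0$, so $v\notin\ker\SFF_v$ and therefore $\langle v,\ker\SFF_v\rangle=\langle v\rangle\oplus\ker\SFF_v$. The exact sequence \eqref{ses:rho_T} identifies $T/\langle v,\ker\SFF_v\rangle$ with $T_zZ$, and because $z=ii([v])$ is a general, hence smooth, point of $Z$ we have $\dim T_zZ=\dim Z=n-\delta$. As $\dim T=n$ this forces $\dim\langle v,\ker\SFF_v\rangle=\delta$, and consequently $\dim\ker\SFF_v=\delta-1$. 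Under the zero-dimensional Gauss-fibre hypothesis, lemma \ref{lem:minimal_singloc_ann} gives $\Singloc(\Ann(v))=\langle v,\ker\SFF_v\rangle$, so the Clifford module of theorem \ref{thm:clifford_module} has dimension
\begin{align*}
  \dim T/\Singloc(\Ann(v)) = n-\delta.
\end{align*}

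Next I would invoke the structure theory. The module is a representation of $Cl(\ker\SFF_v)$, the complex Clifford algebra of a $(\delta-1)$-dimensional quadratic space. Assuming the underlying quadratic form is non-degenerate (as asserted in the remark following theorem \ref{thm:clifford_module}), the classification of complex Clifford algebras applies: $Cl_{\delta-1}$ is isomorphic to a matrix algebra when $\delta-1$ is even and to a sum of two matrix algebras when $\delta-1$ is odd, so in either case every module has dimension divisible by the irreducible-representation dimension $2^{\lfloor(\delta-1)/2\rfloor}$. Applying this to the module of dimension $n-\delta$ yields
\begin{align*}
    \left. 2^{\left\lfloor\frac{\delta-1}{2}\right\rfloor} ~ \right\mid ~ n - \delta,
\end{align*}
as required.

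The divisibility itself is then immediate, so I expect the only genuine point requiring care to be the non-degeneracy of the quadratic form driving the Clifford multiplication on $\ker\SFF_v$. This is essential: a degenerate form would force a different, weaker divisibility bound and spoil the clean power of two. I would therefore trace through the construction in theorem \ref{thm:clifford_module} — in particular the quadratic relation encoded by equation (12.22) — to confirm that the form on $\ker\SFF_v$ is non-degenerate, thereby licensing the matrix-algebra classification and the irreducible dimension $2^{\lfloor(\delta-1)/2\rfloor}$ without modification.
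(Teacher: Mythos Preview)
Your proposal is correct and follows essentially the same approach as the paper: invoke theorem \ref{thm:clifford_module} to obtain a Clifford module, then apply the classification of complex Clifford algebras (matrix algebras or pairs thereof) to deduce that the irreducible dimension $2^{\lfloor(\delta-1)/2\rfloor}$ divides the module dimension $n-\delta$. Your explicit dimension count for $\ker\SFF_v$ and $T/\Singloc(\Ann(v))$ simply spells out what the paper leaves implicit, and your flag about non-degeneracy is appropriate---the paper treats this as part of the construction in theorem \ref{thm:clifford_module} (and asserts it in the remark immediately following), so once you grant the theorem the point is already handled.
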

\begin{proof}
  The result follows immediately from theorem \ref{thm:clifford_module}
  together with the classification of Clifford modules. Indeed if
  there exists a $k$-dimensional Clifford module of a non-degenerate
  $l$-dimensional complex quadratic form, then:
  \begin{align*}
    \left. p ~ \right\mid ~ k,
  \end{align*}
  where $p = 2^{\left\lfloor\frac{l}{2}\right\rfloor}$.
  The reason is that the Clifford algebra of the quadratic form is the matrix
  algebra $\C^{p\times p}$ if $l$ is even or
  $\C^{p\times p} \oplus \C^{p\times p}$ if $l$ is odd
  (see e.g., \cite[Table 1]{MR0167985}) and the natural action of $\C^{p\times p}$
  on $\C^p$ is its unique irreducible representation.
\end{proof}

\begin{remark}
  The divisibility condition established in corollary
  \ref{cor:lqel_divisibility} was first proved by Russo and appeared in
  \cite[Theorem 2.8]{MR2501303} (see also \cite[Theorem 4.2.8]{MR3445582}). The
  proof involved an inductive study of the Hilbert scheme of lines through a
  general point of an LQEL variety.

  A second proof (due to the author) appeared as
  \cite[Corollary 2.6]{MR3228452}. This proof was topological and the key was
  a calculation in topological $K$-theory.

  We now have a third proof (albeit with slightly different assumptions) and
  this time it is Clifford module theory that is the key.

  It would be interesting to explore the relationship between the second and
  third proofs given the deep connections between $K$-theory and Clifford
  modules identified by Atiyah, Bott, and Shapiro in \cite{MR0167985}. The
  first step should be to argue that Landsberg's construction actually
  provides bundles of representations of Clifford algebras, as $x$ varies over
  a general tangent locus.
\end{remark}

\begin{remark}
  Note that the proof of corollary \ref{cor:lqel_divisibility} shows that the 2
  which appears in the expression $(\delta - 1)/2$ corresponds to the mod-2
  periodicity of Morita equivalence classes of complex Clifford algebras. Thus
  it is the same 2 which appears in complex Bott periodicity.
\end{remark}

\begin{remark}
  A quite different connection between Clifford modules and Severi varieties
  arises in the context of {\lq}Clifford structures{\rq}, introduced 
  by Moroianu and Semmelmann in \cite{MR2822214}. The Severi varieties appear
  in the classification of parallel non-flat even Clifford structures in
  \cite{MR2822214} (see also \cite{MR3383780}). It might be interesting to
  explore whether these Clifford structures have any relationship to
  Landsberg's Clifford modules.
\end{remark}

\section{A remark about the $\delta \le 8$ problem}
Let $X \subseteq \PP^{n+a}$ be a smooth, irreducible, non-degenerate, subvariety
with $\Sec(X) \ne \PP^{n+a}$.
It follows from Zak's proof of Hartshorne's linear normality conjecture that
the secant deficiency satisfies:
\begin{align}\label{lin_norm_secdef_bound}
  \delta \le \left\lfloor\frac{n}{2}\right\rfloor.
\end{align}
In the course of their exposition \cite{MR808175} of Zak's work, Lazarsfeld and
Van de Ven highlighted
that all known examples of $X$ as above satisfy $\delta \le 8$. They thus posed
the problem to investigate whether $\delta$ could be arbitrarily large (see
\cite[\S1f, page 19]{MR808175}). In view of \eqref{lin_norm_secdef_bound}, any
$X$ with $\delta > 8$ must have dimension $n \ge 18$.

Very little progress has been made on this problem in the 40 years since it was
first posed. Kaji \cite{MR1621761} has shown that any variety with $\delta > 8$
must be non-homogeneous but otherwise the problem remains completely
open: all known examples still satisfy $\delta \le 8$ and the 16-dimensional
Severi variety remains the only variety known to achieve $\delta = 8$. The
problem remains open even for the very special class of LQEL varieties (see
\cite[chapter 4.4, page 113]{MR3445582} as well as \cite[Remark 3.8]{MR2441252}
and \cite[Conjecture 4.5]{MR3294565}).

We mention this problem here, to highlight that by combining known
results, one may sharpen \eqref{lin_norm_secdef_bound} slightly as follows:
\begin{lemma}\label{lem:delta_bound}
  Let $X \subseteq \PP^{n+a}$ be a smooth, irreducible, non-degenerate subvariety
  with $\Sec(X) \ne \PP^{n+a}$. Suppose $n \ge 17$, then:
  \begin{align*}
    \delta \le \left\lfloor\frac{n - 1}{2}\right\rfloor.
  \end{align*}
\end{lemma}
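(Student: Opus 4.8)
The plan is to combine the general linear-normality bound \eqref{lin_norm_secdef_bound} with the divisibility constraint of corollary \ref{cor:lqel_divisibility}, but the immediate difficulty is that corollary \ref{cor:lqel_divisibility} is only available for varieties satisfying the Gauss-map hypothesis of theorem \ref{thm:clifford_module}, whereas lemma \ref{lem:delta_bound} is stated for an \emph{arbitrary} smooth, irreducible, non-degenerate $X$ with $\Sec(X) \neq \PP^{n+a}$. So the first thing I would do is reduce to the case where the stronger bound \eqref{lin_norm_secdef_bound} is already tight, i.e.\ assume for contradiction that $\delta = \lfloor n/2 \rfloor$; if $\delta \leq \lfloor n/2 \rfloor - 1$ then for $n \geq 17$ one checks directly that $\lfloor n/2 \rfloor - 1 \leq \lfloor (n-1)/2 \rfloor$ (indeed $\lfloor n/2 \rfloor - 1 \le \lfloor (n-1)/2\rfloor$ always holds since $\lfloor n/2\rfloor \le \lfloor(n-1)/2\rfloor + 1$), and we are done. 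Thus the only case requiring work is $\delta = \lfloor n/2 \rfloor$.

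Next I would invoke the classification of varieties attaining equality in Zak's bound. A variety with $\delta = \lfloor n/2 \rfloor$ is precisely a Severi variety (Zak's theorem), and the Severi varieties have $(n,\delta) \in \{(2,1),(4,2),(8,4),(16,8)\}$. For each of these the dimension satisfies $n \le 16 < 17$, so the hypothesis $n \geq 17$ excludes them outright. Therefore under $n \geq 17$ the case $\delta = \lfloor n/2 \rfloor$ cannot occur, forcing $\delta \leq \lfloor n/2 \rfloor - 1 \leq \lfloor (n-1)/2 \rfloor$, which is exactly the claimed inequality.

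In carrying this out, the one arithmetic point to verify carefully is the elementary floor inequality
\begin{align*}
  \left\lfloor \frac{n}{2} \right\rfloor - 1 \le \left\lfloor \frac{n-1}{2} \right\rfloor,
\end{align*}
which holds for every integer $n$: when $n$ is even both sides equal $n/2 - 1$, and when $n$ is odd the left side is $(n-1)/2 - 1$ while the right side is $(n-1)/2$. The only substantive input is thus Zak's classification of the varieties meeting \eqref{lin_norm_secdef_bound} with equality; I expect the main obstacle to be confirming that equality in \eqref{lin_norm_secdef_bound} genuinely characterises Severi varieties in exactly the generality stated (smooth, irreducible, non-degenerate, $\Sec(X) \neq \PP^{n+a}$) rather than requiring an extra hypothesis, so I would cite Zak's theorem precisely and note that the excluded equality case lives entirely in dimension at most $16$. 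Once that is pinned down, the conclusion is immediate and purely numerical.
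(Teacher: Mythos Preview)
Your argument is correct, but it follows a genuinely different route from the paper's. You reduce to the extremal case $\delta = \lfloor n/2\rfloor$ and then invoke Zak's classification of Severi varieties to conclude $n\le 16$. (Strictly speaking, equality in \eqref{lin_norm_secdef_bound} does not \emph{literally} say $X$ is a Severi variety, since that notion fixes the codimension; but your own projection idea---choose a linear space disjoint from $\Sec(X)$ and project to $\PP^{3n/2+2}$---fills this gap, and for odd $n$ there is nothing to prove since $\lfloor n/2\rfloor = \lfloor (n-1)/2\rfloor$.) The paper instead avoids Zak's classification entirely: it splits according to whether the Gauss map of a general tangential projection has zero-dimensional fibres, using Russo's LQEL bound $\delta \le \lfloor (n+8)/3\rfloor$ in the first case and the dual-defect inequality $\delta \le (n-\tilde\xi)/2 \le (n-\tilde\gamma)/2$ in the second. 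Your route is shorter but leans on the deepest input available (the full Severi classification, whose dimension restriction is precisely what the paper's Clifford-module machinery is meant to re-derive), whereas the paper's proof stays within the circle of ideas developed in the note and in Russo's book, and keeps the argument independent of Zak's hardest theorem.
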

\begin{proof}
  For a general tangential projection of $X$, let $\tilde\gamma$ be  the
  dimension of the fibre of its Gauss map and $\tilde\xi$ its dual deficiency.

  Suppose first that $\tilde\gamma = 0$. We may assume $\delta \ge 1$
  (otherwise there is nothing to prove) so by Scorza's Lemma
  \cite[Theorem 3.3.3]{MR3445582} $X$ is an LQEL variety. By
  \cite[Corollary 4.4.11]{MR3445582}:
  \begin{align*}
    \delta \le \left\lfloor\frac{n + 8}{3}\right\rfloor
           \le \left\lfloor\frac{n - 1}{2}\right\rfloor
  \end{align*}
  since $n \ge 17$.

  It remains to consider the case $\tilde\gamma \ge 1$. By
   \cite[Theorem 5.4.1, Lemma 3.3.2]{MR3445582}:
  \begin{align*}
    \delta \le \frac{n - \tilde\xi}{2}
           \le \frac{n - \tilde\gamma}{2}
           \le \frac{n - 1}{2},
  \end{align*}
  as required\footnote{We note in passing that one could instead deal with the
  case $\tilde\gamma \ge 1$ using results of Landsberg. Indeed after projecting
  to ensure $\Sec(X)$ is a hypersurface, one could apply
  \cite[Corollary 7.3]{MR1422359} (equivalently, the inequality at the bottom of
  page 373 of \cite{MR3586335}).}.
\end{proof}
Note that lemma \ref{lem:delta_bound} increases the dimension restriction on
a variety with $\delta > 8$ slightly to $n \ge 19$.
One might hope to make further progress
by studying varieties for which $\tilde\gamma = 1$ and then arguing as in
lemma \ref{lem:delta_bound} but with three cases corresponding to whether
$\tilde\gamma$ is 0, 1, or at least 2.

\bibliography{paper}
\bibliographystyle{plain}
\end{document}